\newcommand*{\Theorem}{Theorem}
\newcommand*{\Proposition}{Proposition}
\newcommand*{\Lemma}{Lemma}
\newcommand*{\Corollary}{Corollary}
\newcommand*{\Definition}{Definition}
\newcommand*{\Remark}{Remark}
\newcommand*{\Notation}{Notation}
\theoremstyle{plain}
\newtheorem{theorem}{\Theorem}
\newtheorem{proposition}[theorem]{\Proposition}
\newtheorem{corollary}[theorem]{\Corollary}
\newtheorem{lemma}[theorem]{\Lemma}
\theoremstyle{definition}
\newtheorem{definition}[theorem]{\Definition}
\theoremstyle{remark}
\newtheorem{remark}[theorem]{\Remark}
\begin{document}

\title{From Bolzano-Weierstra{\ss} to Arzelà-Ascoli}
\author{Alexander P. Kreuzer}
\address{Fachbereich Mathematik, Technische Universit\"{a}t Darmstadt\\
Schlossgartenstra{\ss}e~7, 64289 Darmstadt, Germany
}
\email{akreuzer@mathematik.tu-darmstadt.de}
\thanks{The author is supported by the German Science Foundation (DFG Project KO 1737/5-1).}
\subjclass[2010]{Primary 03F60; Secondary 03D80, 03B30}
\keywords{Arzelà-Ascoli theorem, Bolzano-Weierstra{\ss} principle, computable analysis, reverse mathematics}

\begin{abstract}
  We show how one can obtain solutions to the Arzelà-Ascoli theorem using suitable applications of the Bolzano-Weierstra{\ss} principle. With this, we can apply the results from \cite{aK} and obtain a classification of the strength of instances of the Arzelà-Ascoli theorem and a variant of it.

  Let \lp{AA} be the statement that each equicontinuous sequence of functions $f_n\colon [0,1] \to [0,1]$ contains a subsequence that converges uniformly with the rate $2^{-k}$ and let \lp{AA_{weak}} be the statement that each such sequence contains a subsequence which converges uniformly but possibly without any rate.

  We show that \lp{AA} is instance-wise equivalent over \ls{RCA_0} to the Bolzano-Weierstra{\ss} principle \lp{BW} and that \lp{AA_{weak}} is instance-wise equivalent over \ls{WKL_0} to \lp{BW_{weak}}, and thus to the strong cohesive principle (\lp{StCOH}).
  Moreover, we show that over \ls{RCA_0} the principles \ls{AA_{weak}}, $\lp{BW_{weak}}+\lp{WKL}$ and $\lp{StCOH}+\lp{WKL}$ are equivalent.
\end{abstract}

\maketitle

The Arzelà-Ascoli theorem is the following, well known statement:
\begin{quote}
  Let $f_n\colon [0,1] \to [0,1]$ be an equicontinuous sequence of functions. Then there exists a subsequence of $\big(f_n\big)_{n\in\Nat}$ which converges uniformly.
\end{quote}
Instead of the interval $[0,1]$ one could take any compact set.
The term \emph{equicontinuous} means that 
\[
\Forall{l}\Forall{x\in[0,1]} \Exists{j} \Forall{n} \Forall{y\in [0,1]} \left(\left|x-y\right| < 2^{-j} \IMPL \left|f_n(x) - f_n(y)\right| < 2^{-l} \right)
.\]

We will give two different formalizations of this theorem, show how these can be reduced to suitable instances of the Bolzano-Weierstra{\ss} principle and, using this, obtain a classification of them in the sense of reverse mathematics and computable analysis.

\section{Bolzano-Weierstra{\ss}}
In \cite{aK} we investigated the strength of the following two variants of the Bolzano-Weierstra{\ss} principle:
\begin{itemize}
\item The (strong) Bolzano-Weierstra{\ss} principle (\lp{BW}) is the statement that each bounded sequence of real numbers contains a subsequence converging at the rate $2^{-k}$. (This is the usual formulation in reverse mathematics. The rate $2^{-k}$ stems from the fact that real numbers are coded as sequences that converge at this rate. However, $2^{-k}$ is just an arbitrarily chosen rate. In fact, one can easily convert a sequence converging at a given rate into a sequence converging at any other given rate.)
\item The weak Bolzano-Weierstra{\ss} principle (\lp{BW_{weak}}) is the statement that each bounded sequence of real numbers contains a subsequence that converges but possibly without any rate recursive in the system.
\end{itemize}
It is well known that \lp{BW} is equivalent to \lp{ACA_0}, see \cite{sS09}. We showed that instances of \lp{BW} are equivalent to instances of \lp[\Sigma^0_1]{WKL}, that is \lp{WKL} for trees given by a $\Sigma^0_1$\nobreakdash-predicate. Moreover we showed that the principle \lp{BW_{weak}} is (instance-wise) equivalent to the so-called strong cohesive principle (\lp{StCOH}). In particular, it does not imply \lp{ACA_0}. See \cite{aK}.

We will write \lpp{BW_{(weak)}}{(x_n)} for \lp{BW_{(weak)}} restricted to the sequence $(x_n)$.

\section{Arzelà-Ascoli}

According to the two variants of the Bolzano-Weierstra{\ss} principle we can formalize the Arzelà-Ascoli theorem in two different variants.
Before we will come to this, we define equicontinuity.
\begin{definition}[equicontinuity]\label{def:equi}
  A sequence of functions $f_n\colon [0,1] \to [0,1]$ is said to be \emph{equicontinuous} if the functions $f_n$ are continuous and  have a common, continuous modulus of continuity, i.e.\ there exists a continuous function $\phi(x,l)$ satisfying
  \begin{equation}\label{eq:equi}
  \Forall{l} \Forall{n} \Forall{x,y\in [0,1]} \left(\left|x-y\right| < 2^{-\phi(x,l)} \IMPL \left|f_n(x) - f_n(y)\right| < 2^{-l} \right)
  .\end{equation}

  We call a sequence of functions \emph{uniformly equicontinuous} if the modulus of continuity does not depend on $x$, i.e.~$\phi(x,l)=\phi'(l)$.
\end{definition}
Recall that in  \cite{sS09} continuous functions are defined in a way such that the modulus of continuity is definable. Thus, \prettyref{def:equi} is just a straight forward generalization.

\begin{definition}[Arzelà-Ascoli]\label{def:aa}
   Let $f_n\colon [0,1] \to [0,1]$ be an arbitrary equicontinuous sequence of functions. 
  \begin{itemize}
  \item The (strong) variant of the Arzelà-Ascoli  theorem (\lp{AA}) is the statement that there exists a subsequence $f_{g(n)}$ which converges uniformly at the rate $2^{-k}$, i.e.
    \[
    \Forall{k} \Forall{n,n'>k} \sup_{x\in[0,1]} \left(\left|f_{g(n)}(x) - f_{g(n')}(x)\right|\right) < 2^{-k}
    .\]
  \item The weak variant of the Arzelà-Ascoli theorem (\lp{AA_{weak}}) is the statement that  there exists a subsequence $f_{g(n)}$ which converges uniformly possibly without any given rate, i.e.
    \[
    \Forall{k} \Exists{m} \Forall{n,n'>m} \sup_{x\in[0,1]} \left(\left|f_{g(n)}(x) - f_{g(n')}(x)\right|\right) < 2^{-k}.
    \]
  \end{itemize}

  If we additionally assume $(f_n)_{n\in\Nat}$ to be \emph{uniformly} equicontinuous we write \lp{AA^\textit{uni}} resp.~\lp{AA^\textit{uni}_{weak}}.
  In the case, where we restrict us the instance given by this particular $\big(f_n\big)_n$ we write \lpp{AA^{(\textit{uni})}_{(weak)}}{\big(f_n\big)_n}, resp.~\lpp{AA^{(\textit{uni})}_{(weak)}}{X} if $X$ is a code for $\big(f_n\big)$.
\end{definition}

Simpson showed that \lp{AA} is equivalent to \lp{ACA_0}, see \cite{sS09,sS84}. (Actually he did not assume the existence of a modulus of equicontinuity but only equicontinuity. However, relative to \lp{ACA_0}, as well as this formulation of the Arzelà-Ascoli theorem, the modulus can be constructed out of this.)
In \cite{uK98c} the strength of instances of \lp{AA} was investigated. It was show that instances of \lp{AA} follow from instances of \lp[\Pi^0_1]{CA} and a weak non-standard axiom.

We will now show how one can reduce (instance-wise) the principle \lp{AA^\textit{uni}} and \lp{AA^\textit{uni}_{weak}} to \lp{BW} resp.\ \lp{BW_{weak}}. Since the Arzelà-Ascoli theorem trivially implies the Bolzano-Weierstra{\ss} principle, we obtain a tight classification.
In \prettyref{sec:equi} we will deal with the non-uniformly equicontinuous case.

In the following, we will denote by $q(i)$ an enumeration of $\Rat \cap [0,1]$. By $[0,1]^{\Nat}$ we will denote the usual product space with the usual product metric $d((x_i),(y_i)) := \sum_{i\in\Nat} 2^{-i} \left| x_i - y_i \right|$.

\begin{lemma}\label{lem:1}
  Let $f_n\colon [0,1] \to [0,1]$ be a uniformly equicontinuous sequence of functions.
  Relative to $\ls{RCA_0}$ the following are equivalent:
  \begin{enumerate}[label=(\roman*)]
  \item\label{e1} $\big(f_n\big)_{n\in\Nat}$ converges uniformly,
  \item\label{e3} $\big(f_n\big)_{n\in\Nat}$ converges pointwise on $\Rat \cap [0,1]$, i.e.~$\big(f_n(q(i))\big)_i$ converges in $[0,1]^{\Nat}$ for $n\to\infty$.
  \end{enumerate}
\end{lemma}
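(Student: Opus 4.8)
The plan is to prove the nontrivial implication \ref{e3}$\Rightarrow$\ref{e1} by a finite-net argument and to dispatch the converse as a routine, rate-preserving restriction. For \ref{e1}$\Rightarrow$\ref{e3}: if $(f_n)$ is uniformly Cauchy, say $\sup_{x}\left|f_n(x)-f_{n'}(x)\right|<2^{-l}$ for $n,n'$ beyond some threshold, then in particular $\left|f_n(q(i))-f_{n'}(q(i))\right|<2^{-l}$ for every $i$, whence $d\big((f_n(q(i)))_i,(f_{n'}(q(i)))_i\big)=\sum_i 2^{-i}\left|f_n(q(i))-f_{n'}(q(i))\right|\le 2^{-l+1}$. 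Thus $\big(f_n(q(i))\big)_i$ is Cauchy in $[0,1]^{\Nat}$ with a rate read off explicitly from the rate in \ref{e1}, and since $[0,1]^{\Nat}$ is a complete separable metric space, \ls{RCA_0} provides the limit.

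For the main direction \ref{e3}$\Rightarrow$\ref{e1}, fix a target rate $2^{-k}$ and let $\phi'$ be the uniform modulus from \prettyref{def:equi}, so that $\left|x-y\right|<2^{-\phi'(k+2)}$ implies $\left|f_n(x)-f_n(y)\right|<2^{-(k+2)}$ for all $n$. First I would pick finitely many rationals $q(i_0),\dots,q(i_M)$ forming a $2^{-\phi'(k+2)}$-net of $[0,1]$ (equally spaced points suffice) and set $I:=\max_j i_j$. The coordinate estimate that $d(u,v)<2^{-m}$ forces $\left|u_i-v_i\right|<2^{\,i-m}$ means that taking the rate of convergence in $[0,1]^{\Nat}$ at level $m:=I+k+2$ yields a threshold $N$ with $\left|f_n(q(i_j))-f_{n'}(q(i_j))\right|<2^{-(k+2)}$ for all $j\le M$ and all $n,n'>N$. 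Then for an arbitrary $x\in[0,1]$, choosing a net point $q(i_j)$ with $\left|x-q(i_j)\right|<2^{-\phi'(k+2)}$ and applying the triangle inequality over the three segments gives $\left|f_n(x)-f_{n'}(x)\right|<3\cdot 2^{-(k+2)}<2^{-k}$ for all $n,n'>N$.

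It remains to pass from this pointwise bound to the $\sup$-formulation of \ref{e1}. Since $f_n$ and $f_{n'}$ share the modulus $\phi'$, the function $x\mapsto\left|f_n(x)-f_{n'}(x)\right|$ is uniformly continuous with a known modulus, and \ls{RCA_0} computes its supremum as the limit of maxima over successively finer rational nets; the estimate just obtained shows this supremum is $\le 3\cdot 2^{-(k+2)}<2^{-k}$. Because the passage $k\mapsto(I,N)$ is completely explicit, a rate of convergence in \ref{e3} is converted into a rate for \ref{e1}, and likewise a rateless (merely Cauchy) hypothesis yields a rateless conclusion; this is exactly what lets the single lemma serve both the strong and the weak reductions later on.

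I expect the main obstacle to be precisely the mismatch between the two notions of convergence: a rate in the product metric $d$ controls only finitely many coordinates at a time, whereas uniform convergence demands control over all of $[0,1]$ simultaneously. Uniform equicontinuity is what bridges the gap, since it lets one finite net of rationals---whose size depends only on $k$ and $\phi'$, not on the individual $f_n$---govern the behaviour on the whole interval; this is why the argument fails for merely (non-uniformly) equicontinuous sequences and must be revisited in \prettyref{sec:equi}. The only remaining care is the bookkeeping of the rate offsets so that the triangle inequality closes ($3\cdot2^{-(k+2)}<2^{-k}$) and the verification that every object produced---the net, the index $I$, the threshold $N$, and the supremum---is obtained by primitive recursion or $\Delta^0_1$-comprehension from the given data, keeping the whole argument inside \ls{RCA_0}.
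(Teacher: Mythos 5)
Your proposal is correct and follows essentially the same route as the paper's proof: both use the uniform modulus at level $k+2$ to fix a finite rational net, extract coordinatewise $2^{-(k+2)}$-closeness on that net from product-metric closeness at a level offset by the maximal index of the net points, and close with the triangle inequality $3\cdot 2^{-(k+2)}<2^{-k}$. Your added remarks on the rate-preserving bookkeeping and on computing the supremum in \ls{RCA_0} correspond to what the paper defers to \prettyref{cor:1} and to the standard treatment of continuous functions with moduli, so nothing is missing.
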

\begin{proof}
  The implication $\ref{e1} \IMPL \ref{e3}$ is  clear. We show $\ref{e3}\IMPL \ref{e1}$.
  Suppose that $\big(f_n\big)$ converges pointwise on $\Rat \cap [0,1]$. We have to show that for a given $k$  there is an $m$ such that 
  \begin{equation}\label{eq:1}
  \Forall{n,n' > m}  \sup_{{x\in[0,1]}} \left(\left|f_{n}(x) - f_{n'}(x)\right|\right) < 2^{-k}
  .\end{equation}

  By uniform equicontinuity there is a $j:=\phi'(k+2)$ such that 
  \begin{equation}\label{eq:2}
  \Forall{n}\Forall{x,y\in [0,1]} \left( \left|x-y\right| < 2^{-j} \IMPL \left|f(x)-f(y) \right| < 2^{-(k+2)}\right)
  .\end{equation}

  Now by assumption $\big(f_n(y)\big)$ converges for each $y$ in  the set 
  \[
  Y_j:= \left\{ \frac{i}{2^{-(j+1)}} \sizeMid 0 \le i \le 2^{-(j+1)}\right\} \subseteq \Rat
  .\]
  Moreover by the definition of $d$ we know that all $\big(f_n(y)\big)$ with $y\in Y_j$ are $\epsilon$-close to their limit-points in $[0,1]$ if $\big(f_n(q(i))\big)$ is $2^{-\max q^{-1}(Y_j)} \epsilon$-close to its limit-point in $[0,1]^\Nat$.
    In particular, we get by setting $\epsilon$ to $2^{-(k+2)}$ 
  \begin{equation} \label{eq:3}
     \Exists{m'} \Forall{y\in Y_j} \Forall{n,n'>m'} \left( \left| f_n(y)-f_{n'}(y) \right| < 2^{-(k+2)}\right).
  \end{equation}

  We claim that setting $m:= m'$ satisfies \eqref{eq:1}. Indeed, for $n,n' > m$ we have
  \begin{align*}
    \left|f_{n}(x) - f_{n'}(x)\right| & < \left| f_n(y) - f_{n'}(y) \right| + 2 \cdot 2^{-(k+2)} && \parbox{4.9cm}{by \eqref{eq:2}, where $y$ is a $2^{-(j+1)}$\nobreakdash-close to $x$ element of $Y_j$} \\
    & < 3 \cdot 2^{-(k+2)} && \text{by \eqref{eq:3}} \\
    & < 2^{-k}.
  \end{align*}
  The lemma follows.
\end{proof}

For convergence with a rate we have a similar result.
\begin{corollary}\label{cor:1}
  Let $f_n\colon [0,1] \to [0,1]$ be a uniformly equicontinuous sequence of functions.
  Then relative to $\ls{RCA_0}$ the following are equivalent:
  \begin{enumerate}[label=(\roman*)]
  \item $\big(f_n\big)_{n\in\Nat}$ converges uniformly at a given rate,
  \item $\big(f_n\big)_{n\in\Nat}$ converges pointwise on $\Rat \cap [0,1]$ at a given rate.
  \end{enumerate}
  We do not state a fixed rate here since the rates may differ. However, they can be uniformly calculate from each other and the modulus of uniform equicontinuity.
\end{corollary}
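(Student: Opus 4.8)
The plan is to notice that the proof of \prettyref{lem:1} is already quantitative, so it suffices to track how the threshold $m$ depends on the data and to feed in a rate for the pointwise convergence in place of the bare existential quantifier in \eqref{eq:3}.

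The direction from (i) to (ii) is immediate. If $m(k)$ witnesses $\sup_{x} |f_n(x) - f_{n'}(x)| < 2^{-k}$ for all $n,n' > m(k)$, then in particular $|f_n(q(i)) - f_{n'}(q(i))| < 2^{-k}$ for every $i$, and since the weights $2^{-i}$ in $d$ only shrink these contributions we get $d\big((f_n(q(i)))_i,(f_{n'}(q(i)))_i\big) < 2^{-k+1}$. Hence $k \mapsto m(k+1)$ is a rate for convergence in $[0,1]^{\Nat}$, and this conversion is plainly available in $\ls{RCA_0}$.

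For the direction from (ii) to (i) I would reread the proof of \prettyref{lem:1} with a rate $r$ for the pointwise convergence in hand, i.e.\ $d\big((f_n(q(i)))_i,(f_{n'}(q(i)))_i\big) < 2^{-s}$ for all $n,n' > r(s)$. Fix $k$ and, exactly as before, set $j := \phi'(k+2)$ and form the finite set $Y_j$ together with $N := \max q^{-1}(Y_j)$, the largest index under which a point of $Y_j$ occurs in the enumeration $q$. Since $2^{-i}|x_i - y_i| \le d$ for every coordinate, a product distance below $2^{-(N+k+2)}$ forces $|f_n(y) - f_{n'}(y)| < 2^{-(k+2)}$ simultaneously for all $y \in Y_j$; thus \eqref{eq:3} holds with $m' := r(N+k+2)$. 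The estimate in the proof of \prettyref{lem:1} then shows that $m(k) := r\big(N(k) + k + 2\big)$, where $N(k) := \max q^{-1}\big(Y_{\phi'(k+2)}\big)$, is a rate for the uniform convergence \eqref{eq:1}.

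The point at which the ``$\ls{RCA_0}$'' claim really sits is the verification that this $m$ is definable without any new comprehension: $\phi'$ comes with the sequence by uniform equicontinuity, the finite set $Y_j$ and the index $N(k)$ are computed primitive-recursively from $j$ and the enumeration $q$, and $r$ is the supplied rate, so $m$ is a composition of these. I expect the main (still routine) obstacle to be the bookkeeping with the weights $2^{-i}$ of $d$ --- checking that the single worst coordinate $N(k)$ dominates the whole finite set $Y_j$ and that the resulting $m$ is monotone. The closing remark of the statement, that the two rates and $\phi'$ determine one another uniformly, then reads off directly from the explicit formula for $m(k)$ together with its counterpart $k \mapsto m(k+1)$ from the first direction.
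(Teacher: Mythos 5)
Your proposal is correct and follows essentially the same route as the paper: the paper's proof also just reinspects the proof of \prettyref{lem:1} and records that $\big(f_n\big)$ is $2^{-k}$-close to its uniform limit whenever $\big(f_n(q(i))\big)$ is $2^{-(k+\max q^{-1}(Y_{\phi'(k)}))}$-close in $[0,1]^{\Nat}$, which is exactly your formula $m(k)=r\big(N(k)+k+2\big)$ up to the inessential offset in the argument of $\phi'$. Your explicit treatment of the trivial direction and of the definability of $m$ in $\ls{RCA_0}$ is just a more detailed write-up of what the paper leaves implicit.
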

\begin{proof}
  Similar to \prettyref{lem:1}. Again the implication $\ref{e1} \IMPL \ref{e3}$ is trivial. For the implication $\ref{e3} \to \ref{e1}$ note that a careful inspection of the proof of \prettyref{lem:1} yields that
  $\big(f_n\big)$ is $2^{-k}$-close to its uniform limit point if $\big( f_n(q(i)) \big)$ is $2^{-(k+\max q^{-1}(Y_{\phi'(k)}))}$-close in $[0,1]^{\Nat}$.
\end{proof}

Now to reduce the Arzelà-Ascoli theorem for a sequence of uniformly equicontinuous functions $(f_n)_{n\in\Nat}$ to a suitable instance of the Bolzano-Weierstra{\ss} principle we considered the following mapping
\[
F\colon f \mapsto {\big(f(q(i))\big)}_{i} \in [0,1]^\Nat
.\]

With this function we get a sequence $\big(F(f_n)\big)_{n\in\Nat}$ in $[0,1]^{\Nat}$ and by Lemma~\ref{lem:1} we know that for any subsequence given by $g$ we have
\begin{equation}\label{eq:con}
\big(F(f_{g(n)})\big)_{n\in\Nat} \text{ converges in } [0,1]^{\Nat}  \qquad \text{if{f}} \qquad \big(f_{g(n)}\big)_{n\in\Nat}\text{ converges uniformly}
.\end{equation}

Since one can map the unit interval $[0,1]$ isometrically into the Cantor space $2^\Nat$ (take for instance the binary expansion), we can modify $F$ such that it maps into $2^\Nat$ and the equivalence in \eqref{eq:con} remains true. Since ${(2^\Nat)}^\Nat$ is homeomorphic to the Cantor space, we can again modify $F$ and obtain a function $F'$ such that
\begin{equation}\label{eq:con2}
\big(F'(f_{g(n)})\big)_{n\in\Nat} \text{ converges in } 2^\Nat  \qquad \text{if{f}} \qquad \big(f_{g(n)}\big)_{n\in\Nat}\text{ converges uniformly}
.\end{equation}

Thus, we reduced \lp{AA_{weak}^\textit{uni}} to the weak Bolzano-Weierstra{\ss} principle on the Cantor space, which is (instance-wise) equivalent to \lp{BW_{weak}}, see \cite[Lemma~2.1]{aK}.  Hence, we obtain the following theorem.

\begin{theorem}\label{thm:aau}
  Over \ls{RCA_0} the principles \lp{AA_{weak}^\textit{uni}} and \lp{BW_{weak}} are instance-wise equivalent, i.e.\ there exists codes of Turing machines $e_1$, $e_2$ such that
  \begin{enumerate}[label=\arabic*)]
  \item\label{enum:aau1} $\lp{RCA_0} \vdash \Forall{X} \left(\lpp{BW_{weak}}{\{e_1\}^X} \IMPL \lpp{AA_{weak}^\textit{uni}}{X}\right)$,
  \item\label{enum:aau2} $\lp{RCA_0} \vdash \Forall{X} \left(\lpp{AA_{weak}^\textit{uni}}{\{e_2\}^X} \IMPL \lpp{BW_{weak}}{X}\right)$.
  \end{enumerate}
  In particular, \lp{AA_{weak}^\textit{uni}} is also instance-wise equivalent to the strong cohesive principle.
\end{theorem}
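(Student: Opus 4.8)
The plan is to produce the two machines $e_1$ and $e_2$ separately, with \ref{enum:aau1} carrying all the content and \ref{enum:aau2} being the trivial converse. For \ref{enum:aau1} I would package exactly the reduction set up just before the statement: let $e_1$ be the machine that, on an oracle $X$ coding a uniformly equicontinuous sequence $(f_n)$ together with its modulus of uniform equicontinuity, first forms the sequence $\big(F(f_n)\big)_n$ in $[0,1]^\Nat$, then pushes it through the embedding of $[0,1]^\Nat$ into $2^\Nat$ underlying \eqref{eq:con2} to obtain $\big(F'(f_n)\big)_n$, and finally applies the term-by-term translation of \cite[Lemma~2.1]{aK} converting an instance of weak Bolzano-Weierstra{\ss} on $2^\Nat$ into an instance $\{e_1\}^X$ of \lp{BW_{weak}}. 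Each step is effective and uniform in $X$, so $e_1$ exists and $\{e_1\}^X$ is provably total in \ls{RCA_0}.

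The one thing to verify is that a solution of \lpp{BW_{weak}}{\{e_1\}^X} yields a solution of \lpp{AA_{weak}^\textit{uni}}{X}. Here I would exploit that every map in the chain defining $e_1$ acts termwise, hence preserves indices: a selector $h$ witnessing \lpp{BW_{weak}}{\{e_1\}^X} is, after undoing the translation of \cite[Lemma~2.1]{aK}, the same selector for $\big(F'(f_n)\big)_n$. Convergence of that subsequence in $2^\Nat$ is, by \eqref{eq:con2}, equivalent to uniform convergence of $\big(f_{h(n)}\big)_n$, so $g := h$ witnesses \lpp{AA_{weak}^\textit{uni}}{X}. As \eqref{eq:con2} rests on \prettyref{lem:1}, the whole implication is provable in \ls{RCA_0}.

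For \ref{enum:aau2} I would let $e_2$ send an oracle $X$ coding a bounded real sequence $(x_n)$, which after the obvious computable rescaling we take to have values in $[0,1]$, to the code $\{e_2\}^X$ of the sequence of constant functions $f_n \equiv x_n$. These are uniformly equicontinuous with the trivial modulus $\phi'(l) = 0$, so $\{e_2\}^X$ is a genuine instance of \lp{AA_{weak}^\textit{uni}}, and for every selector $g$ one has $\sup_{x\in[0,1]} \big|f_{g(n)}(x) - f_{g(n')}(x)\big| = \big|x_{g(n)} - x_{g(n')}\big|$. Thus uniform convergence of $\big(f_{g(n)}\big)_n$ is literally the Cauchy property of $\big(x_{g(n)}\big)_n$, and a solution of \lpp{AA_{weak}^\textit{uni}}{\{e_2\}^X} is already a solution of \lpp{BW_{weak}}{X} with the same $g$; this is the precise form of the remark that Arzelà-Ascoli trivially implies Bolzano-Weierstra{\ss}. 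The final \lp{StCOH} assertion then follows by composing $e_1$ and $e_2$ with the machines witnessing the instance-wise equivalence of \lp{BW_{weak}} and \lp{StCOH} from \cite{aK}, using that instance-wise equivalence is transitive.

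I expect the only real difficulty to be bookkeeping rather than mathematics: one must check that the embedding $[0,1]^\Nat \hookrightarrow 2^\Nat$ and the translation of \cite[Lemma~2.1]{aK} are simultaneously convergence-preserving \emph{and} index-preserving, so that the selector transfers back as $g = h$, and that this is formalizable in \ls{RCA_0}. The mathematical substance is already contained in \prettyref{lem:1} and \eqref{eq:con2}; the theorem is the instance-wise repackaging of that content together with the trivial converse.
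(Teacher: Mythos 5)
Your proposal is correct and follows essentially the same route as the paper: $e_1$ is the machine computing $F'$ via the chain through $[0,1]^\Nat$ and $2^\Nat$ with correctness resting on \prettyref{lem:1} and \eqref{eq:con2}, $e_2$ sends a bounded sequence to the corresponding constant functions, and the \lp{StCOH} claim is delegated to \cite{aK}. You merely spell out the index-preservation and verification details that the paper leaves implicit.
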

\begin{proof}
   One checks that the argument in the discussion before the theorem formalizes in \ls{RCA_0}. Setting $e_1$ to be the code of the Turing machine which calculates $F'$ yields then \ref{enum:aau1}.

   For \ref{enum:aau2} let $e_2$ be the code of the Turing machine which maps the sequence of numbers coded by $X$ to the sequence of constant functions having that values. This instance of the Arzelà-Ascoli theorem trivially implies the weak Bolzano-Weierstra{\ss} principle.

   For the equivalence to the strong cohesive principle see \cite[Theorem~3.2]{aK}.
\end{proof}

Replacing \prettyref{lem:1} by \prettyref{cor:1} and \lp{BW_{weak}} by \lp{BW} and noting that the rate of convergence in the proof can be explicitly calculate yields the following corollary.
\begin{corollary}\label{cor:aau}
  Over \ls{RCA_0} the principles \lp{AA^\textit{uni}} and \lp{BW} are instance-wise equivalent. In particular, \lp{AA^\text{uni}} is also instance-wise equivalent to \lp[\Sigma^0_1]{WKL}.
\end{corollary}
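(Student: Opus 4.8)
The plan is to run the argument of \prettyref{thm:aau} essentially verbatim, but with \prettyref{cor:1} in place of \prettyref{lem:1} and with \lp{BW} in place of \lp{BW_{weak}}, tracking the rate of convergence throughout. Since neither of the two reductions depends on a rate, the very same machine codes $e_1$, $e_2$ as in \prettyref{thm:aau} will witness the instance-wise equivalence; the only new work is to check that each transformation sends ``convergence at a rate'' to ``convergence at a rate'' and that the resulting rate is computed uniformly.

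For the forward direction I would take $e_1$ to be the code computing the map $F'$ into $2^\Nat$ from the discussion preceding \prettyref{thm:aau}. Given an instance $X$ coding $\big(f_n\big)_n$, applying \lp{BW} in its Cantor-space formulation to $\big(F'(f_n)\big)_n$ yields a subsequence $\big(F'(f_{g(n)})\big)_n$ that converges at the rate $2^{-k}$. Because the embedding of $[0,1]$ into $2^\Nat$ via binary expansion and the homeomorphism ${(2^\Nat)}^\Nat\cong 2^\Nat$ are explicit, a rate of convergence in $2^\Nat$ translates into an explicit rate for the pointwise convergence of $\big(f_{g(n)}(q(i))\big)_n$ on $\Rat\cap[0,1]$. \prettyref{cor:1} then upgrades this to uniform convergence of $\big(f_{g(n)}\big)_n$ at a rate, the new rate being obtained uniformly from the input rate and the modulus of uniform equicontinuity. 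This gives $\lp{RCA_0}\vdash\Forall{X}\big(\lpp{BW}{\{e_1\}^X}\IMPL\lpp{AA^\textit{uni}}{X}\big)$.

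For the reverse direction I take $e_2$ to be the same code as in \prettyref{thm:aau}, sending a bounded sequence of reals to the sequence of constant functions with those values. A subsequence of these constant functions converging uniformly at a rate is precisely a subsequence of the original reals converging at a rate, so this instance of \lp{AA^\textit{uni}} yields \lp{BW} for $X$, that is $\lp{RCA_0}\vdash\Forall{X}\big(\lpp{AA^\textit{uni}}{\{e_2\}^X}\IMPL\lpp{BW}{X}\big)$. The ``in particular'' clause is then immediate from the instance-wise equivalence of \lp{BW} with \lp[\Sigma^0_1]{WKL} recorded for the Bolzano-Weierstra{\ss} principle above (see \cite{aK}).

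The one point requiring genuine care, and the main obstacle, is the rate bookkeeping through the two encodings into Cantor space: one must verify that the binary-expansion embedding and the homeomorphism ${(2^\Nat)}^\Nat\cong 2^\Nat$ are rate-preserving, so that a modulus of convergence in $2^\Nat$ computably yields a modulus for pointwise convergence on the rationals. This is exactly what lets us invoke \prettyref{cor:1} rather than \prettyref{lem:1}, and it is what separates the strong statement from the weak one; the verification itself is routine but must be carried out explicitly.
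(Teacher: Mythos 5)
Your proposal is correct and follows exactly the route the paper takes: the paper's entire justification is the remark that replacing \prettyref{lem:1} by \prettyref{cor:1} and \lp{BW_{weak}} by \lp{BW} in the argument for \prettyref{thm:aau}, while noting that the rate of convergence can be explicitly calculated, yields the corollary. Your additional attention to the rate bookkeeping through the Cantor-space encodings is just a more explicit spelling-out of that same observation.
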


\section{Uniform equicontinuity versus equicontinuity}\label{sec:equi}

\begin{proposition}\label{pro:equi}
  Let $\big(f_n\big)$ be an equicontinuous sequence of functions. The system \ls{WKL_0} proves that $\big(f_n\big)$ is uniformly equicontinuous.
\end{proposition}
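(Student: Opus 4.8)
The plan is to deduce \prettyref{pro:equi} from the standard fact, provable in \ls{WKL_0}, that every continuous real-valued function on the compact interval $[0,1]$ is bounded (equivalently, attains a maximum); see \cite{sS09}. By \prettyref{def:equi} there is a continuous function $\phi(x,l)$ witnessing \eqref{eq:equi}, and the guiding observation is that enlarging the modulus only strengthens the hypothesis of \eqref{eq:equi}. Concretely, suppose $\phi'(l) \ge \phi(x,l)$ holds for all $x \in [0,1]$. Then $2^{-\phi'(l)} \le 2^{-\phi(x,l)}$, so for every $n$ and all $x,y \in [0,1]$ the implication $|x-y| < 2^{-\phi'(l)} \IMPL |x-y| < 2^{-\phi(x,l)}$ is trivially true, and hence \eqref{eq:equi} yields $|f_n(x) - f_n(y)| < 2^{-l}$. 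Thus any $x$-independent upper bound $\phi'(l)$ for the slice $x \mapsto \phi(x,l)$ is already a modulus of \emph{uniform} equicontinuity in the sense of \prettyref{def:equi}.

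Given this, the construction is immediate: first I would fix $l$ and apply the boundedness theorem to the continuous function $x \mapsto \phi(x,l)$ on $[0,1]$, letting $\phi'(l)$ be an integer upper bound for $\{\phi(x,l) : x \in [0,1]\}$ (for instance the ceiling of the maximum, whose existence \ls{WKL_0} also guarantees). Running this for every $l$ produces a function $\phi'\colon \Nat \to \Nat$ that depends on $l$ alone, and by the previous paragraph it witnesses uniform equicontinuity. Since $\phi'$ does not mention $x$ at all, the continuity-in-$x$ demanded of a modulus is vacuous, so nothing further is needed to match the format of \prettyref{def:equi}.

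The step I expect to be the main obstacle is the uniformity in $l$: the boundedness principle is applied slicewise and nominally returns one bound per value of $l$, so some care is required to collect these into a single second-order object $\phi'$ within \ls{WKL_0}. I would resolve this by exploiting that the \ls{WKL_0} proof of boundedness (going through Heine--Borel compactness of $[0,1]$) is uniform in parameters, so that applying it to the two-variable continuous function $\phi$ yields the entire sequence of bounds simultaneously, from which $\phi'$ is extracted by $\Delta^0_1$ means. As an alternative that avoids invoking the maximum value principle as a black box, one can build $2^{-\phi'(l)}$ directly as a Lebesgue number: cover $[0,1]$ by the balls $\{\,y : |y - x_0| < \tfrac12 \, 2^{-\phi(x_0,\,l+1)}\,\}$, pass to a finite subcover by Heine--Borel, and check that any two points within the resulting Lebesgue number lie in a common ball, whence their $f_n$-values differ by at most $2^{-(l+1)} + 2^{-(l+1)} = 2^{-l}$; this again produces a quantity depending only on $l$.
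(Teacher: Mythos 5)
Your proposal is correct and follows essentially the same route as the paper: apply the \ls{WKL_0}-provable maximum/boundedness principle (Theorem~IV.2.2 of \cite{sS09}) to each slice $\lambda x.\phi(x,l)$, observe that the construction is uniform in $l$ so the bounds can be collected into a single function $\phi'$, and use the monotonicity of \eqref{eq:equi} in the modulus to conclude that $\phi'$ witnesses uniform equicontinuity. The Lebesgue-number variant you sketch is a reasonable alternative but is not needed.
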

\begin{proof}
  Let $\phi(x,l)$ be a modulus of equicontinuity for $\big(f_n\big)$. By Theorem IV.2.2 in \cite{sS09} for each $l$ there exists the maximum of $\lambda x . \phi(x,l)$. A careful inspection of the proof of this theorem shows that this process parallelizes. Thus. we can define a function $\phi'(l)$ such that $\phi(x,l) \le \phi'(l)$ for all $x\in [0,1]$, $l\in \Nat$. Since \eqref{eq:equi} in \prettyref{def:equi} is monotone in $\phi$, the function $\lambda x,l . \phi'(l)$ satisfies the sentence. Thus, it is a modulus of uniform equicontinuity.
\end{proof}

Using this, we can immediately refine \prettyref{thm:aau} and \prettyref{cor:aau} and obtain the following corollary and theorem.
\begin{corollary}\label{cor:weak}
  Over \ls{WKL_0} the principles \lp{AA_{weak}}, \lp{BW_{weak}}, \lp{StCOH} are instance-wise equivalent.
  (Actually only to show \lp{AA_{weak}} the principle \lp{WKL} is needed.)
\end{corollary}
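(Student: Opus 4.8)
The plan is to splice \lp{AA_{weak}} into the equivalence class of \prettyref{thm:aau} by showing that, over \ls{WKL_0}, an instance of \lp{AA_{weak}} is essentially an instance of \lp{AA_{weak}^\textit{uni}}. The decisive ingredient is \prettyref{pro:equi}: over \ls{WKL_0} every equicontinuous sequence carries a modulus of \emph{uniform} equicontinuity. Since \lp{AA_{weak}} and \lp{AA_{weak}^\textit{uni}} share the same conclusion and differ only in their hypothesis, it suffices to supply the two instance-wise reductions between \lp{AA_{weak}} and \lp{BW_{weak}} and then to append the equivalence with \lp{StCOH} from \cite[Theorem~3.2]{aK} exactly as in \prettyref{thm:aau}.

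For the reduction of \lp{AA_{weak}} to \lp{BW_{weak}} I would reuse the machine $e_1$ of \prettyref{thm:aau}, sending a code $X$ of $\big(f_n\big)$ to $\big(F'(f_n)\big)_{n}$ in $2^\Nat$. The point to verify is that $\{e_1\}^X$ is defined for \emph{every} equicontinuous $X$, not just a uniformly equicontinuous one: computing $F(f)=\big(f(q(i))\big)_i$ only uses the pointwise modulus of continuity that is already part of the code of a continuous function, and neither the isometric embedding $[0,1]\hookrightarrow 2^\Nat$ nor the homeomorphism ${(2^\Nat)}^\Nat\cong 2^\Nat$ mentions a modulus of equicontinuity. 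Correctness is then argued inside \ls{WKL_0}: by \prettyref{pro:equi} the sequence $\big(f_n\big)$ has a modulus of uniform equicontinuity, which is precisely what the proof of \prettyref{lem:1} consumes, so the equivalence \eqref{eq:con2} holds and any \lp{BW_{weak}}-solution for $\{e_1\}^X$ produces a uniformly convergent subsequence, i.e.\ $\lpp{AA_{weak}}{X}$.

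For the converse I would keep the machine $e_2$ of \prettyref{thm:aau}, which turns a bounded sequence coded by $X$ into the corresponding sequence of constant functions. These are trivially (uniformly) equicontinuous, hence a legitimate instance of \lp{AA_{weak}}, and a uniformly convergent subsequence of constant functions is just a convergent subsequence of the underlying reals, yielding $\lpp{BW_{weak}}{X}$. This direction invokes no instance of \lp{WKL}, which is exactly the content of the parenthetical remark. Combining the two reductions with \prettyref{thm:aau} and \cite{aK} gives the asserted equivalence of \lp{AA_{weak}}, \lp{BW_{weak}} and \lp{StCOH}.

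The one place I expect to need care is the bookkeeping in the middle paragraph: I must make sure that the reduction map is genuinely independent of the uniform modulus, so that \lp{WKL} enters only through the correctness argument via \prettyref{pro:equi} and is not silently built into the construction of the \lp{BW_{weak}}-instance. Once this is checked, everything else is a routine reindexing of \prettyref{thm:aau}.
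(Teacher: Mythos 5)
Your proposal is correct and follows the same route as the paper: the paper's proof consists precisely of invoking \prettyref{pro:equi} to identify \lp{AA_{weak}} with \lp{AA_{weak}^\textit{uni}} over \ls{WKL_0} and then citing \prettyref{thm:aau}. Your additional bookkeeping---that the machine $e_1$ only evaluates the $f_n$ at rationals and so never consumes the uniform modulus, confining \lp{WKL} to the verification---is left implicit in the paper's one-line proof but is consistent with (and in fact anticipates) the parenthetical remark and the later argument in \prettyref{cor:aawcomp}.
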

\begin{proof}
  By \prettyref{pro:equi} every equicontinuous sequence of functions is in \lp{WKL_0} uniformly equicontinuous. Thus, there is no difference between \lp{AA_{weak}^\textit{uni}} and \lp{AA_{weak}} in \lp{WKL_0} and this corollary follows from \prettyref{thm:aau}.
\end{proof}

\begin{theorem}\label{thm:aa}
  Over \ls{RCA_0} the principles \lp{AA}, \lp{BW}, \lp[\Sigma^0_1]{WKL} are instance-wise equivalent.
\end{theorem}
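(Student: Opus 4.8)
The plan is to bootstrap from \prettyref{cor:aau}, which already supplies the instance-wise equivalence of \lp{AA^\textit{uni}}, \lp{BW} and \lp[\Sigma^0_1]{WKL}, and to close the gap between \lp{AA} and \lp{AA^\textit{uni}} at the cost of one further \lp[\Sigma^0_1]{WKL}-instance that can be absorbed into the one already present. The easy direction is immediate and is literally part \ref{enum:aau2} of \prettyref{thm:aau}: the code $e_2$ sending a bounded sequence of reals to the corresponding sequence of constant functions produces a (uniformly) equicontinuous instance, and any uniformly convergent subsequence yields a convergent subsequence of the reals, so $\lpp{AA}{\{e_2\}^X}\IMPL\lpp{BW}{X}$ over \ls{RCA_0}. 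Since by \cite{aK} instances of \lp{BW} and of \lp[\Sigma^0_1]{WKL} are interchangeable, this settles the two downward reductions.

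For the converse I would reduce an arbitrary instance of \lp{AA} to a \emph{single} instance of \lp[\Sigma^0_1]{WKL}. The crucial observation is that the reduction map $F'$ from the discussion preceding \prettyref{thm:aau} is built from the sequence $\big(f_n\big)$ alone and never refers to the modulus of equicontinuity; uniform equicontinuity enters only in the \emph{verification} that a convergent subsequence of $\big(F'(f_n)\big)$ corresponds to a uniformly convergent subsequence of $\big(f_n\big)$, via \prettyref{lem:1} and \prettyref{cor:1}. Hence, given an instance $X$ of \lp{AA} with modulus $\phi(x,l)$, I would form two \lp[\Sigma^0_1]{WKL}-instances, \emph{both depending on $X$ only}: first a tree $T_A$ extracted from the proof of \prettyref{pro:equi}, a path through which codes a uniform modulus $\phi'$ with $\phi(x,l)\le\phi'(l)$ (the fibrewise maxima of $\lambda x.\phi(x,l)$ form a \lp{WKL}-instance, hence a fortiori a \lp[\Sigma^0_1]{WKL}-instance, and the argument parallelises in $l$); and second the Σ⁰₁-tree $T_B$ attached to the \lp{BW}-instance $\big(F'(f_n)\big)$ through the instance-wise equivalence of \lp{BW} and \lp[\Sigma^0_1]{WKL} from \cite{aK} together with \cite[Lemma~2.1]{aK}.

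Because neither $T_A$ nor $T_B$ depends on the solution of the other, I would fuse them into the product tree $T_A\otimes T_B$, which is again $\Sigma^0_1$ and infinite and whose branches are pairs of branches; a path through it simultaneously delivers a uniform modulus $\phi'$ and a subsequence $g$ for which $\big(F'(f_{g(n)})\big)$ converges at the rate $2^{-k}$. With $\phi'$ in hand, $\big(f_n\big)$ is provably uniformly equicontinuous, so \prettyref{cor:1} applies and shows, inside \ls{RCA_0}, that $\big(f_{g(n)}\big)$ converges uniformly at a rate computable from $\phi'$ and the rate on the rationals. This yields a code $e_1$ with the instance $\{e_1\}^X$ of \lp[\Sigma^0_1]{WKL} implying $\lpp{AA}{X}$, and translating the combined instance back through the equivalence of \cite{aK} produces the corresponding single \lp{BW}-instance.

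The step I expect to be the main obstacle is exactly the one just isolated: in \ls{RCA_0} one cannot first uniformise and then invoke Bolzano-Weierstra{\ss}, because \prettyref{pro:equi} genuinely consumes \lp{WKL}. What makes the argument go through is that uniformisation is needed only to \emph{verify} a solution, never to \emph{construct} the instance, so the two demands on the oracle are parallel rather than sequential and merge into one $\Sigma^0_1$ tree. I would take care to confirm that the maximum construction behind \prettyref{pro:equi} can indeed be presented as a single $\Sigma^0_1$-tree uniformly in $l$ (this is precisely where the parallelisation of the proof of Theorem~IV.2.2 of \cite{sS09} is used), and that both the product-tree construction and the back-translation to \lp{BW} are faithful instance-wise reductions provable in \ls{RCA_0}.
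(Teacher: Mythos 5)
Your proposal is correct and follows essentially the same route as the paper: the decisive observation in both is that the modulus of uniform equicontinuity is needed only to \emph{verify} the solution, so the uniformisation instance coming from \prettyref{pro:equi} and the Bolzano-Weierstra{\ss} instance for $\big(F'(f_n)\big)$ are parallel rather than sequential and can be merged into a single instance recursive in $X$. The only (immaterial) difference is where the merging happens: you interleave the two trees into one $\Sigma^0_1$ tree and translate back to \lp{BW} via the equivalence of \cite{aK}, whereas the paper converts the uniformisation tree into a \lp{BW}-instance via $\Pi^0_1$-comprehension (citing \cite{uK00}) and then pairs the two \lp{BW}-instances into one on $[0,1]^2$.
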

\begin{proof}
  We show that there exists an $e$ such that
  \[
  \ls{RCA_0} \vdash \Forall{X} \left(\lpp{BW}{\{e\}^{X}} \IMPL \lpp{AA}{X}\right) 
  .\]

  Fix an $X$ that codes a sequence of equicontinuous functions.
  Note that in the proof of \prettyref{pro:equi} the principle \lp{WKL} is only used for trees recursive in $\phi(x,l)$ and thus recursive in $X$. 

  To ask whether a given node $x$ in a 0/1-tree $T$ has infinitely many successors is the $\Pi^0_1$-statement $\Forall{n} \Exists{y\in 2^n} x\ast y \in T$. 
  If we can decide this for each node in an infinite 0/1\nobreakdash-tree $T$, we can build an infinite branch by searching for the leftmost branch of nodes having infinitely many successors.
  Thus, an instance of $\Pi^0_1$\nobreakdash-comprehension recursive in $X$ suffices to show \lp{WKL}.
  By Theorem~5.5 and Lemma~4.1 of \cite{uK00} a suitable instance of \lp{BW} (even the weaker principle of convergence for monotone sequences) implies this instance of $\Pi^0_1$-comprehension.
  Thus, we can find an $e'$ such that \lpp{BW}{\{e'\}^{X}} implies that the modulus of uniform continuity $\phi'$ for the sequence of functions coded by $X$ exists.
  By the proof of \prettyref{cor:aau} there is an $e''$ such that \lpp{BW}{\{e''\}^{X}} shows that the sequence of functions converges pointwise on $\Rat\cap[0,1]$.

  In the argument of the proof of \prettyref{cor:aau} the modulus $\phi'$ is only used after the pointwise converging sequence is built. Thus, this is enough to show \lpp{AA}{(f_n)}.

  Now the two instances of \lp{BW} can be coded into an instance of the Bolzano-Weierstra{\ss} principle on $[0,1]^2$. This instance  is again equivalent to an instance of \lp{BW}. Let $e$ be the code of a Turing-machine which computes this instance. This concludes the proof.
\end{proof}

This yields as corollary the following classification of Simpson of the principle \lp{AA}.
\begin{corollary}[\cite{sS09,sS84}]
  Over \lp{RCA_0} the principles \lp{AA} and \lp{ACA_0} are equivalent.
\end{corollary}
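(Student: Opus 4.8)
The plan is to read this corollary off from \prettyref{thm:aa} together with the classical equivalence of \lp{BW} and \lp{ACA_0}. By \prettyref{thm:aa} the principles \lp{AA} and \lp{BW} are instance-wise equivalent, so it suffices to lift this instance-wise equivalence to an equivalence of the full principles and then invoke the fact, recalled earlier, that \lp{BW} is equivalent to \lp{ACA_0} over \ls{RCA_0} (see \cite{sS09}). Since all the reductions involved are given by fixed Turing machines and the maps $X \mapsto \{e\}^X$ are available in \ls{RCA_0}, this lifting is essentially a matter of bookkeeping.

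For the direction $\lp{AA}\IMPL\lp{ACA_0}$ I would use the AA-to-BW half of \prettyref{thm:aa}: there is a machine $e_2$ with $\ls{RCA_0}\vdash\Forall{X}\left(\lpp{AA}{\{e_2\}^X}\IMPL\lpp{BW}{X}\right)$, where $\{e_2\}^X$ sends a bounded sequence of reals to the associated sequence of constant functions, which is trivially (uniformly) equicontinuous. Assuming the full principle \lp{AA}, every instance $\lpp{AA}{\{e_2\}^X}$ holds, hence every instance $\lpp{BW}{X}$ holds; that is, \lp{AA} proves \lp{BW}, and therefore \lp{ACA_0}.

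For the converse $\lp{ACA_0}\IMPL\lp{AA}$ I would use the BW-to-AA half of \prettyref{thm:aa}, i.e.\ the machine $e_1$ with $\ls{RCA_0}\vdash\Forall{X}\left(\lpp{BW}{\{e_1\}^X}\IMPL\lpp{AA}{X}\right)$. Since \lp{ACA_0} is equivalent to \lp{BW}, it proves every instance $\lpp{BW}{\{e_1\}^X}$, and hence, by the reduction, every instance $\lpp{AA}{X}$; thus \lp{ACA_0} proves \lp{AA}.

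Combining the two directions gives the stated equivalence. I do not expect any genuine obstacle: all of the mathematical content sits in \prettyref{thm:aa} and in the classical $\lp{BW}\IMPL\lp{ACA_0}$ (and back) theorem, and the only thing to verify is that quantifying the instance-wise implications over all oracles $X$ turns them into implications between the full principles — which is immediate because the reduction machines $e_1,e_2$ do not depend on $X$.
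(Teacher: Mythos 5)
Your proposal is correct and matches the paper's (implicit) argument exactly: the corollary is read off from \prettyref{thm:aa} by lifting the instance-wise equivalence of \lp{AA} and \lp{BW} to the full principles and invoking the classical equivalence of \lp{BW} with \lp{ACA_0} over \ls{RCA_0}, with the \lp{AA}-to-\lp{BW} direction supplied by the constant-functions instance as in \prettyref{thm:aau}.
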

We also obtain the following computational classification.
\begin{corollary}
  Let $d$ be a Turing degree with $d \gg 0'$, i.e.~$d$ contains an infinite branch for each infinite $0'$-computable 0/1-tree.
  Then each computable sequence of equicontinuous functions $f_n \colon [0,1] \to [0,1]$ has a subsequence $\big(f_{(g(n))}\big)$ computable in $d$, which converges uniformly with the rate $2^{-n}$.
\end{corollary}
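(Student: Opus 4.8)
The plan is to read off the corollary as the purely computational content of the instance-wise reductions of \prettyref{thm:aa} together with the classification of \lp{BW} from \cite{aK}. Fix a computable, equicontinuous sequence $\big(f_n\big)$ and let $X$ be a computable code for it. By \prettyref{thm:aa} there is a fixed index $e$ with $\ls{RCA_0} \vdash \Forall{X}\big(\lpp{BW}{\{e\}^{X}} \IMPL \lpp{AA}{X}\big)$, and this reduction is effective, so from any rate-$2^{-k}$ convergent subsequence of the bounded sequence coded by $\{e\}^{X}$ one computes the subsequence $g$ witnessing \lpp{AA}{X}. Since $X$ is computable, the instance $\{e\}^{X}$ of \lp{BW} is computable as well. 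Note that \prettyref{thm:aa} already folds the construction of the modulus $\phi'$ into $e$, so the general equicontinuous case is covered.

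Next I would invoke the instance-wise equivalence of \lp{BW} with \lp[\Sigma^0_1]{WKL} from \cite{aK}: from $\{e\}^{X}$ one computes a $\Sigma^0_1$-index of an infinite 0/1-tree $T$ such that every infinite branch of $T$ effectively computes a rate-$2^{-k}$ convergent subsequence of $\{e\}^{X}$. Because the data $X$ is computable, membership in $T$ is $\Sigma^0_1$, hence c.e., so $T$ is an infinite $0'$-computable 0/1-tree. By hypothesis $d \gg 0'$, so $d$ contains an infinite branch $P$ of $T$. Unwinding the two reductions — both effective relative to the computable data — the branch $P$ yields, inside $d$, a solution of the \lp{BW} instance and then the subsequence $g$ solving \lpp{AA}{X}; as \lp{AA} is the strong variant, $\big(f_{g(n)}\big)$ converges uniformly at the rate $2^{-n}$. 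Every datum used lies in $d$, since in any case $d \gg 0'$ forces $d \ge 0'$: the tree of initial segments of $\chi_{0'}$ is an infinite $0'$-computable 0/1-tree whose unique branch is $0'$, so any branch-selecting $d$ computes $0'$.

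The conceptual step is just the composition of these two reductions, so the only real work — and what I expect to be the main obstacle — is the bookkeeping on the reductions imported from \cite{aK}: checking that the 0/1-tree they produce is genuinely $\Sigma^0_1$ (and hence $0'$-computable) once the underlying sequence is computable, and that the passage from a branch of $T$ back to the uniformly convergent subsequence is effective. Granting this, the branch supplied by any $d \gg 0'$ produces a $d$-computable subsequence $\big(f_{g(n)}\big)$ converging uniformly with the rate $2^{-n}$, as claimed.
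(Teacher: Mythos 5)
Your proposal is correct and follows essentially the same route as the paper, which simply observes that a degree $d \gg 0'$ contains solutions to each computable instance of \lp[\Sigma^0_1]{WKL} and then invokes \prettyref{thm:aa}; you merely make explicit the intermediate passage through \lp{BW}, the $0'$-computability of the $\Sigma^0_1$-tree, and the (correct) observation that $d \gg 0'$ forces $d \ge 0'$.
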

\begin{proof}
  A degree $d\gg 0'$ contains solutions to each computable instance of \lp[\Sigma^0_1]{WKL}. The corollary follows from this and \prettyref{thm:aa}.
\end{proof}
\noindent
Since \lp{AA} instance-wise implies \lp[\Sigma^0_1]{WKL}, this corollary is optimal.

We will now show that \lp{WKL_0} is necessary in \prettyref{cor:weak} by showing that \lp{AA_{weak}} implies it.
Since \lp{BW_{weak}} is equivalent to $\lp{StCOH}$ which does not imply \lp{WKL}, see \cite[Lemma~9.14]{CJS01} and note that in $\omega$-models \lp{COH} and \lp{StCOH} are the same, the system cannot be weakened to \ls{RCA_0}.

\begin{proposition}\label{pro:wkl}
  \[
  \ls{RCA_0} \vdash \lp{AA_{weak}} \IMPL \lp{WKL}
  \]
\end{proposition}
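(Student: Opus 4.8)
The plan is to establish the implication by a reductio: assuming that $T\subseteq 2^{<\Nat}$ is an infinite binary tree \emph{without} an infinite path, I will produce an equicontinuous sequence of functions $f_n\colon[0,1]\to[0,1]$ that admits no uniformly convergent subsequence, contradicting \lp{AA_{weak}}. Refuting the existence of such a $T$ is exactly \lp{WKL}. The construction is as follows. To each $\sigma\in 2^{<\Nat}$ attach the standard dyadic subinterval $I_\sigma\subseteq[0,1]$ of length $2^{-|\sigma|}$ and a tent $b_\sigma$ of height $1$ over $I_\sigma$; its steepness is $\sim 2^{|\sigma|}$ and grows with the level. Since the $I_\sigma$ along a branch are nested, the tents accumulate, so that $g:=\sum_{\sigma\in T}b_\sigma$ counts, roughly, the depth to which $T$ persists above $x$. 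As $T$ has no path, for each $x$ only finitely many summands are nonzero near $x$, so $g(x)$ is a well-defined real number. Finally put $f_n:=\min(g,n)/n$, which maps $[0,1]$ into $[0,1]$.

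First I would verify equicontinuity. Because $|f_n(x)-f_n(y)|\le|g(x)-g(y)|$ holds for every $n$, a single modulus for $g$ serves as a common modulus for the whole sequence in the sense of \prettyref{def:equi}. I would exhibit an explicit continuous $\phi(x,l)$ for $g$: near a point $x$ only the tents $b_\sigma$ with $\sigma$ on the finite chain of $T$-nodes above $x$ are relevant, and both their number and their steepness are controlled by the depth of $x$, which — using that $T$ has no path — is finite and varies continuously. Checking that this $\phi$ is a bona fide continuous-function code in the sense of \cite{sS09}, provably in \ls{RCA_0}, is the technical heart of the argument, and it is precisely where the absence of a path enters.

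Next I would show that no subsequence converges uniformly. Since $T$ is infinite there is a node of each length, so $g$ is unbounded; as $g$ is continuous and vanishes at the endpoints, the intermediate value theorem (available in \ls{RCA_0}) provides, for every $n$, a point with $g(x)=n$. A short computation then gives $\sup_x|f_n(x)-f_{n'}(x)|=1-n/n'$ for $n<n'$. Consequently, for any candidate subsequence and any tail, comparing its first member with a sufficiently late one produces functions differing by almost $1$ in the supremum norm, so no subsequence is uniformly Cauchy, let alone uniformly convergent. Applying \lp{AA_{weak}} to the equicontinuous sequence $(f_n)$ yields a uniformly convergent — hence uniformly Cauchy — subsequence, the desired contradiction; therefore $T$ has a path and \lp{WKL} follows.

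The main obstacle is the equicontinuity step: verifying in \ls{RCA_0} that the depth function $g$ carries a genuinely \emph{continuous} modulus $\phi(x,l)$, so that Definition~\ref{def:equi} is met and \lp{AA_{weak}} may be invoked. The subtlety is that an unbounded function on $[0,1]$ can possess a continuous modulus only in the absence of \lp{WKL}: the no-path hypothesis is exactly what keeps the depth finite at every point and the modulus continuous, whereas the unboundedness of $g$ is what defeats uniform convergence. This is the mirror image of \prettyref{pro:equi}, where \lp{WKL} was needed to bound such a continuous modulus uniformly, and it explains why \lp{AA_{weak}} is strictly stronger than its uniformly equicontinuous variant.
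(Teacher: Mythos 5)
Your overall strategy---assume an infinite tree $T\subseteq 2^{<\Nat}$ with no infinite path, construct from it an equicontinuous sequence $f_n\colon[0,1]\to[0,1]$ with no uniformly convergent subsequence, and thereby refute \lp{AA_{weak}}---is exactly the paper's, and your closing remark that a continuous but \emph{unbounded} modulus is available precisely in the absence of \lp{WKL} identifies the right phenomenon. The paper likewise attaches intervals to tree nodes, but it uses the Cantor middle-third intervals $[a_s,b_s]$ indexed by the set $\tilde T$ of minimal nodes outside $T$, so that incomparable intervals are separated by gaps; this keeps the piecewise-linear interpolation and the modulus construction clean.

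There is, however, a genuine gap at the step where you need the sequence to have no uniformly convergent subsequence. Everything there rests on the claim that $g:=\sum_{\sigma\in T}b_\sigma$ is unbounded because ``$T$ has a node of each length,'' i.e.\ on the heuristic that $g(x)$ measures the depth of $T$ above $x$. That heuristic is false: a point lying in a long chain of nested dyadic intervals is, in relative terms, close to the boundary of most of them, where the corresponding tents are small. Concretely, for $x\in I_{0^k}=[0,2^{-k}]$ the tent $b_{0^j}$ over $[0,2^{-j}]$ satisfies $b_{0^j}(x)\le 2^{j+1}x\le 2^{j+1-k}$ for $j<k$, so $\sum_{j\le k}b_{0^j}(x)\le 3$ no matter how large $k$ is; the tents along a chain sum geometrically, not linearly. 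More generally, $g(x)$ is controlled by the number of bit-alternations in the binary expansion of $x$ below the nodes of $T$, not by the depth, so $g$ is bounded on any infinite tree whose nodes have boundedly many alternations. To salvage the argument you would have to prove, in \ls{RCA_0}, that every infinite tree \emph{without} an infinite path has unbounded alternation counts --- a nontrivial claim the proposal never addresses --- and since $f_n=\min(g,n)/n$ converges uniformly to $0$ whenever $g$ is bounded, the construction simply collapses without it. The paper avoids the issue by not summing anything: it sets $f_n=1$ on each $[a_s,b_s]$ with $s\in\tilde T$ and $\lth(s)>n$ and $f_n=0$ when $\lth(s)\le n$, so $\sup_x f_n(x)=1$ for every $n$ follows immediately from $T$ being infinite, while $f_n\to 0$ pointwise; hence no subsequence can converge uniformly. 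If you want to keep your tent functions, replacing the sum by $f_n(x):=\sup\{\,b_\sigma(x)\mid \sigma\in T,\ |\sigma|>n\,\}$ achieves the same effect, since each deep node contributes its full height $1$ at its midpoint regardless of how the chain above it winds.
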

\begin{proof}
  We will show that $\NOT \lp{WKL} \IMPL \NOT \lp{AA_{weak}}$. The construction is inspired by Theorem IV.2.3 of \cite{sS09}.

  Let $T\subseteq 2^\Nat$ be a tree which is infinite but does not have an infinite path. Such a tree exists by $\NOT \lp{WKL}$.
  Define $\tilde{T}$ to be set of $u\in 2^{<\Nat}$ such that $u \notin T \AND \Forall{t \sqsubsetneq u} \left( t\in T \right)$.

  Let $C$ be the Cantor middle-third set given by
  \[
  C:=\left\{ \sum_{i\in\Nat} \frac{2 \cdot f(i)}{3^{i+1}} \in  [0,1] \sizeMid  f\in 2^\Nat \right\}
  .\]
  Further, for each $s\in 2^{<\Nat}$ let
  \[
  a_s := \sum_{i<\lth(s)} \frac{2\cdot(s)_{i}}{3^{i+1}}, \qquad b_s = a_s + \frac{1}{3^{\lth(s)}}
  .\]
  We now consider the set $S := \bigcup_{s\in \tilde{T}} [a_s,b_s]$. Since the intervals $[a_s,b_s]$ with $s \in \tilde{T}$ are disjoint, for each $x\in S$ there is exactly one $s$ such that $x\in [a_s,b_s]$.

  We claim that $C\subseteq S$. Indeed for each $x\in C$ the exists a unique $f$ such that $x \in [a_{f(n)}, b_{f(n)}]$. 
  Since the tree $T$ has no infinite path and, therefore, $f$ is no such path, there is an $n$ such that  $f(n)\in \tilde{T}$ and $x\in [a_{f(n)},b_{f(n)}]$.

  For each $x\notin S$ we have $x\notin C$. By the properties of $C$ there exists a unique $s$, such that $x\in (b_{s \ast \langle 0 \rangle}, a_{s \ast \langle 1 \rangle})$. Since $a_{s \ast \langle 1 \rangle}, b_{s \ast \langle 0 \rangle} \in C\subseteq S$ there are unique $v,w\in \tilde{T}$, such that 
  \[
  b_{s \ast \langle 0 \rangle} \in [a_v,b_v], \qquad
  a_{s \ast \langle 1 \rangle} \in [a_w,b_w]
  \]
  and thus $x \in (b_v,a_w)$ and $(b_v,a_w) \cap S = \emptyset$ for unique $v,w \in \tilde{T}$.

  We now construct an equicontinuous sequence of functions $f_n\colon [0,1] \to [0,1]$ such that $f_n$ does converge pointwise to the constant $0$ function but does not converge uniformly.

  We define $f_n$ on the set $S$ and use linear interpolation on $[0,1]\setminus S$.

  We set $f_n$ to $1$ on $[a_s,b_s]$ if $\lth(s) > n$ and to $0$ if $\lth(s) \le n$. It follows that  $f_n$ converges pointwise to the constant $0$ function.
  Since $T$ is infinite, there are arbitrary long $s$ and we can find for each $n$ an $x$ such that $f_n(x)=1$.
  Thus, $f_n$ does not converge uniformly.

  In total $f_n$ is given by the following expression.
  \[
  f_n(x)\! :=\!
  \begin{cases}
    1 &\!\! \text{if $x\in [a_s,b_s]$ for $s\in\tilde{T}$ and $\lth(s)\! >\! n$,} \\
    0 &\!\! \text{if $x\in [a_s,b_s]$ for $s\in\tilde{T}$ and $\lth(s)\! \le\! n$,} \\
    f_n(b_v) + \frac{x-b_v}{a_w-b_v}(f_n(a_w)-f_n(b_v)) & 
    \!\!\parbox{4cm}{if $x\in (b_v,a_w)$ for $v,w\in \tilde{T}$ \\\hspace*{1em} and $(b_v,a_w) \cap S=\emptyset$.} 
  \end{cases}
  \]
  The functions $f_n$ are well-defined since $s$ resp.~$u,v$ are uniquely determined for each $x$. It is clear the each $f_n$ is continuous. We argue now that the sequence is equicontinuous.
  On the intervals $(a_s,b_s)$ with $s\in \tilde{T}$ each $f_n$ is constant and, therefore, we can define the modulus of equicontinuity here easily.
  On the intervals $[b_v,a_w]$ with $v,w\in \tilde{T}$ and $(b_v,a_w) \cap S=\emptyset$ the gradient of each $f_n$ is at most $\left|\frac{1}{a_w-b_v}\right|$. Thus, the modulus of equicontinuity can also be defined here.

  The sequence of functions $(f_n)$ provides a counterexample to \lp{AA_{weak}}. This concludes the proof of the proposition.
\end{proof}

With this we obtain the following classification of \lp{AA_{weak}}.
\begin{theorem}\label{thm:aaweq}
  Relative to \lp{RCA_0} the following are equivalent:
  \begin{enumerate}[label=(\roman*)]
  \item \label{ea1} $\lp{AA_{weak}}$,
  \item \label{ea2} $\lp{BW_{weak}} + \lp{WKL}$,
  \item \label{ea3} $\lp{StCOH}+\lp{WKL}$.
  \end{enumerate}
\end{theorem}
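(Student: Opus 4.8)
The plan is to assemble the equivalence entirely from the instance-wise reductions already established, so that no new analytic construction is needed; the single genuinely new observation is that every uniformly equicontinuous sequence is \emph{a fortiori} equicontinuous, which is what lets the uniform results of \prettyref{thm:aau} feed the non-uniform setting.

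First I would dispose of \ref{ea2}~$\Leftrightarrow$~\ref{ea3}, which requires nothing beyond \cite{aK}: there it is shown that over \ls{RCA_0} the principle \lp{BW_{weak}} is equivalent to \lp{StCOH}. Adjoining the common axiom \lp{WKL} to both sides preserves this, giving $\lp{BW_{weak}}+\lp{WKL} \Leftrightarrow \lp{StCOH}+\lp{WKL}$. Hence it suffices to treat \ref{ea1}~$\Leftrightarrow$~\ref{ea2}.

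For \ref{ea1}~$\IMPL$~\ref{ea2} I would argue the two conjuncts separately. The implication $\lp{AA_{weak}} \IMPL \lp{WKL}$ is exactly \prettyref{pro:wkl}. For $\lp{AA_{weak}} \IMPL \lp{BW_{weak}}$, note that setting $\phi(x,l):=\phi'(l)$ exhibits any uniformly equicontinuous sequence as an equicontinuous one, so that each instance of \lp{AA_{weak}^\textit{uni}} is in particular an instance of \lp{AA_{weak}}; thus $\lp{AA_{weak}} \IMPL \lp{AA_{weak}^\textit{uni}}$. Composing this with the instance-wise reduction of \prettyref{thm:aau}, which yields $\lp{AA_{weak}^\textit{uni}} \IMPL \lp{BW_{weak}}$, gives the second conjunct. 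Conversely, for \ref{ea2}~$\IMPL$~\ref{ea1} I would observe that $\lp{RCA_0}+\lp{WKL}$ is precisely \ls{WKL_0}, so under hypothesis \ref{ea2} one may reason over \ls{WKL_0}; \prettyref{cor:weak} then supplies $\lp{BW_{weak}} \IMPL \lp{AA_{weak}}$ directly, closing the cycle.

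I expect no serious obstacle, since the whole content already resides in the earlier lemmas; the only point demanding care is the bookkeeping of which reductions hold instance-wise versus as full schemes and of the direction in which each is applied. Concretely, \prettyref{thm:aau} must be read as passing \emph{from} an Arzelà-Ascoli instance \emph{to} a Bolzano-Weierstra{\ss} instance in the direction \ref{ea1}~$\IMPL$~\ref{ea2}, whereas \prettyref{cor:weak} supplies the reverse passage and crucially relies on the extra axiom \lp{WKL} present in \ref{ea2}; this asymmetric use of \lp{WKL} is exactly what is already flagged in the parenthetical remark of \prettyref{cor:weak}.
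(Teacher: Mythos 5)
Your proposal is correct and follows essentially the same route as the paper: the paper likewise derives \ref{ea1}~$\IMPL$~\ref{ea2} from \prettyref{thm:aau} together with \prettyref{pro:wkl}, derives \ref{ea2}~$\IMPL$~\ref{ea1} from \prettyref{thm:aau} together with \prettyref{pro:equi} (which is exactly the content of \prettyref{cor:weak} that you invoke), and settles \ref{ea2}~$\IFF$~\ref{ea3} by the equivalence of \lp{BW_{weak}} and \lp{StCOH} from \cite{aK}. Your extra care about the direction of the instance-wise reductions and the asymmetric role of \lp{WKL} only makes explicit what the paper leaves implicit.
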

\begin{proof}
  The implication $\ref{ea1} \IMPL \ref{ea2}$ follows from \prettyref{thm:aau} and \prettyref{pro:wkl}; the implication $\ref{ea2} \IMPL \ref{ea1}$ follows from  \prettyref{thm:aau} and \prettyref{pro:equi}. For $\ref{ea2} \IFF \ref{ea3}$ see \cite[Theorem~3.2]{aK}.
\end{proof}

In the case of \lp{AA_{weak}} the principle \lp{WKL} is only needed in the verification of the solution and not in the computation of it. Thus, we obtain the following corollary.
\begin{corollary}\label{cor:aawcomp}
  Every equicontinuous sequence $f_n\colon [0,1] \to [0,1]$ contains a $low_2$ uniformly converging subsequence $\big( f_{g(n)}\big)_n$ (possibly without a computable rate).
\end{corollary}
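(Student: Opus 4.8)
The plan is to follow the instance-wise reduction behind \prettyref{thm:aau} and to take seriously the remark preceding this corollary: the construction of the subsequence is nothing but the solution of a computable instance of $\lp{BW_{weak}}$, and $\lp{WKL}$ enters only when one certifies that the resulting pointwise limit is in fact \emph{uniform}. Concretely, let $(f_n)$ be a computable equicontinuous sequence. Since we argue in the standard $\omega$-model, $(f_n)$ is automatically uniformly equicontinuous — this is the classical Heine–Cantor phenomenon on the compact space $[0,1]$, and it is precisely this fact that \prettyref{pro:equi} formalizes using $\lp{WKL}$. Hence \prettyref{lem:1} and the map $F'$ of \eqref{eq:con2} apply. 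Crucially, $F'$ is computable from a code for $(f_n)$ alone: to evaluate $f_n$ at the rationals $q(i)$ one only needs the individual moduli of continuity packaged into the code, not a common modulus. Thus finding a uniformly convergent subsequence of $(f_n)$ reduces to finding a convergent subsequence of the computable sequence $\big(F'(f_n)\big)_n$ in $2^\Nat$, that is, to solving a computable instance of $\lp{BW_{weak}}$.

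Next I would invoke the computational strength of $\lp{BW_{weak}}$. By \cite[Theorem~3.2]{aK} this instance is instance-wise equivalent to an instance of $\lp{StCOH}$, which in the present $\omega$-model coincides with $\lp{COH}$. By the $low_2$ cohesiveness theorem of Cholak, Jockusch and Slaman \cite{CJS01}, every computable instance of $\lp{COH}$ admits a $low_2$ solution. Pulling this solution back through the (computable) reductions produces a function $g$ computable from a $low_2$ oracle — and hence itself of $low_2$ degree, since $g \le_T X$ gives $g'' \le_T X'' \equiv_T 0''$ — such that $\big(F'(f_{g(n)})\big)_n$ converges in $2^\Nat$.

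Finally I would read off the conclusion from \eqref{eq:con2}: because $(f_n)$ is uniformly equicontinuous in the standard model, convergence of $\big(F'(f_{g(n)})\big)_n$ is \emph{equivalent} to uniform convergence of the subsequence $\big(f_{g(n)}\big)_n$. As $\big(f_{g(n)}\big)_n$ is computable from $g$, it is $low_2$; and since $\lp{BW_{weak}}$ supplies no rate of convergence, neither does this construction, which accounts for the parenthetical caveat. The emphasis to make explicit is that $\lp{WKL}$ is never used to build $g$: it is invoked only to know, inside $\ls{WKL_0}$, that uniform equicontinuity holds and therefore that pointwise convergence on the rationals upgrades to uniform convergence. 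In the standard model this upgrade is automatic, so no $\lp{WKL}$-strength is injected into the Turing degree of the solution.

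The main obstacle is conceptual rather than computational: one must cleanly separate the two roles played by the reductions of the previous sections. The computation of $g$ has to be confined strictly to the $\lp{BW_{weak}}$ (equivalently $\lp{StCOH}$) layer, so that the $low_2$ bound of \cite{CJS01} applies verbatim, while the non-effective passage from equicontinuity to uniform equicontinuity is relegated entirely to the verification step. The delicate point is confirming that $F'$ and the reduction to $\lp{StCOH}$ are genuinely computable from $(f_n)$ and do not covertly consume a modulus of uniform equicontinuity during the construction — which is exactly what \prettyref{lem:1}, \prettyref{cor:1}, and the discussion preceding \prettyref{thm:aau} were arranged to guarantee.
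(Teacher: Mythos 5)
Your proposal is correct and follows essentially the same route as the paper: reduce to a computable instance of \lp{BW_{weak}} via the map $F'$, obtain a $low_2$ solution, and observe that the modulus of uniform equicontinuity is needed only in the verification (\prettyref{lem:1}), not in the computation of $g$. The only cosmetic difference is that you unpack the $low_2$ bound through \lp{StCOH}/\lp{COH} and the Cholak--Jockusch--Slaman theorem, whereas the paper cites \cite[Theorem~3.5]{aK} directly for $low_2$ solutions of computable \lp{BW_{weak}} instances.
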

\begin{proof}
  By \cite[Theorem 3.5]{aK} each computable instance of \lp{BW_{weak}} has a $low_2$ solution. Since $g$ is computed by an instance of \lp{BW_{weak}}, see discussion before \prettyref{thm:aau}, it is also $low_2$. 

  Note that for this computation the modulus of uniform equicontinuity is not needed.
  The only use of the modulus of uniform equicontinuity is the verification in \prettyref{lem:1}. Thus, it suffices that the modulus of uniform equicontinuity exists and we do not have to compute it.
\end{proof}

Using \prettyref{thm:aaweq} one can extend the conservation and program extraction results obtained in \cite{CJS01,CSYta} and \cite{swkl} to \lp{AA_{weak}} and obtain the following theorem.
\begin{theorem}\mbox{}
  \begin{enumerate}[label=\arabic*)]
  \item\label{enum:r1} \lp{AA_{weak}} is $\Pi^1_1$\nobreakdash-conservative over $\ls{RCA_0}+\lp[\Pi^0_1]{CP}$ and $\ls{RCA_0}+\lp[\Sigma^0_2]{IA}$.
  \item\label{enum:r2} From a proof of a sentence of the form $\Forall{f\in \Nat^\Nat}\Exists{x\in \Nat} A_\qf(f,x)$ in the system $\lp{WKL_0^\omega}+ \lp[\Pi^0_1]{CP} +\lp{AA_{weak}}$ one can extract a primitive recursive term $t$ realizing $x$, i.e.~a term $t$ such that $\Forall{f} A_\qf(f,t(f))$ holds.
  \end{enumerate}
  In particular, \lp{AA_{weak}} is $\Pi^0_2$-conservative over \ls{PRA}.
\end{theorem}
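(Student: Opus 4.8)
The plan is to prove everything for $\lp{StCOH}+\lp{WKL}$ and then transport it across \prettyref{thm:aaweq}. By that theorem \lp{AA_{weak}} is provably equivalent over \ls{RCA_0} to $\lp{StCOH}+\lp{WKL}$ (and to $\lp{BW_{weak}}+\lp{WKL}$), and the equivalence is instance-wise and uniform. Consequently, over any of the base theories in the statement, a sentence is derivable with the help of \lp{AA_{weak}} if and only if it is derivable from $\lp{StCOH}+\lp{WKL}$; so it suffices to establish \ref{enum:r1} and \ref{enum:r2} with \lp{AA_{weak}} replaced by $\lp{StCOH}+\lp{WKL}$. Note that for \ref{enum:r2} the base system \lp{WKL_0^\omega} already contains \lp{WKL}, so there the reduction is simply to \lp{StCOH}, equivalently to \lp{BW_{weak}}.

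For \ref{enum:r1} the two ingredients are already available in the literature and I would combine them model-theoretically. From \cite{CJS01}, with the induction-theoretic refinement of \cite{CSYta}, one has that \lp{StCOH} is $\Pi^1_1$-conservative over $\ls{RCA_0}+\lp[\Pi^0_1]{CP}$ and over $\ls{RCA_0}+\lp[\Sigma^0_2]{IA}$ (recall $\lp[\Pi^0_1]{CP}$ is just $B\Sigma^0_2$ and that in $\omega$-models \lp{COH} and \lp{StCOH} coincide); and \lp{WKL} is $\Pi^1_1$-conservative over \ls{RCA_0} by Harrington's tree-forcing/low-basis construction, which adjoins a low solution to weak König's lemma without changing the first-order part. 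Given a countable model $M$ of the base theory carrying a counterexample to a $\Pi^1_1$ instance, I would first pass to an $\omega$-extension of $M$ satisfying \lp{StCOH} with the same first-order theory, and then iterate the WKL-extension to satisfy \lp{WKL} while keeping the first-order part, hence the available amount of $\Sigma^0_2$-induction resp.\ $\Pi^0_1$-collection, fixed.

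The main obstacle is exactly this combination step: one must verify that adjoining WKL-solutions does not destroy \lp{StCOH} and that the cohesive sets already present are not invalidated by the later extension, all while preserving the first-order part. This is delicate because of the well-documented interaction between cohesion and $B\Sigma^0_2$, and it is precisely to make the induction bookkeeping close that the base theories are taken to be $\ls{RCA_0}+\lp[\Pi^0_1]{CP}$ and $\ls{RCA_0}+\lp[\Sigma^0_2]{IA}$ rather than plain \ls{RCA_0}; I would carry out the iteration by following the forcing-preservation arguments of \cite{CJS01,CSYta}.

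For \ref{enum:r2} I would apply a monotone (Dialectica) functional interpretation to \lp{WKL_0^\omega} as developed in \cite{swkl}. Under this interpretation \lp{WKL} carries no genuine computational content and is eliminable, so it contributes nothing to the extracted term, while the interpretation of \lp{StCOH} (equivalently \lp{BW_{weak}}, whose solutions are computationally weak by \cite{aK}) is realized by primitive recursive functionals. Feeding these into the interpretation of a proof of $\Forall{f\in\Nat^\Nat}\Exists{x\in\Nat} A_\qf(f,x)$ yields a primitive recursive term $t$ with $\Forall{f} A_\qf(f,t(f))$, which is \ref{enum:r2}. Specializing to $\Pi^0_2$-sentences and using that the provably total functions of \ls{PRA} are exactly the primitive recursive ones gives the final $\Pi^0_2$-conservation of \lp{AA_{weak}} over \ls{PRA}.
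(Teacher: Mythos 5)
Your proposal is correct and follows essentially the same route as the paper, which likewise transports the known results through \prettyref{thm:aaweq}: the first part is reduced to the $\Pi^1_1$-conservativity of $\lp{COH}+\lp{WKL}$ established in \cite{CJS01,CSYta}, and the second part to the term-extraction result of \cite{swkl} obtained via monotone functional interpretation. The only slip is your justification for trading \lp{StCOH} for \lp{COH}: coincidence in $\omega$-models is irrelevant for conservation over nonstandard models of the base theories; what is needed (and what the paper invokes) is the provable equivalence of \lp{StCOH} with $\lp{COH}+\lp[\Pi^0_1]{CP}$ over \ls{RCA_0}, which suffices here because both base theories prove $\lp[\Pi^0_1]{CP}$.
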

\begin{proof}
  For \ref{enum:r1} see \cite{CJS01} for the conservativity over $\ls{RCA_0}+\lp[\Sigma^0_2]{IA}$ and \cite{CSYta} for the conservativity over $\ls{RCA_0}+\lp[\Pi^0_1]{CP}$ and note that \lp{StCOH} is equivalent to $\lp{COH} + \lp[\Pi^0_1]{CP}$.

  For \ref{enum:r2} see \cite[Corollary~38]{swkl}.
\end{proof}

\begin{remark}
  The classification of the Arzelà-Ascoli theorem can also be formulated in terms of the Weihrauch-lattice. 
  We will not introduce the notation for the Weihrauch-lattice but refer the reader to \cite{BG11,BGM12}.

  Continuous functions $f\colon [0,1]\to [0,1]$ can be represented as associates on the Baire-space, see \cite{sK59b,gK59}. We will denote this space by $\mathcal{C}([0,1],[0,1])$ and the representation by $\delta_{\mathcal{C}}$. With this we can formulate the Arzelà-Ascoli theorem as partial multifunction between realized spaces which maps sequences of equicontinuous functions to its uniform limit points, i.e.
  \[
  \mathsf{AA} :\subseteq \left(\mathcal{C}([0,1],[0,1]), \delta_{\mathcal{C}}\right)^\Nat \rightrightarrows \left(\mathcal{C}([0,1],[0,1]),\delta_{\mathcal{C}}\right)
  \]
  with $dom(\mathsf{AA})= \{\,(f_n) \mid (f_n) \text{ equicontinuous}\,\}$. As customary in the Weihrauch-lattice, we do not assume that any extra information---like a modulus of equi\-con\-ti\-nu\-i\-ty---is given as input. However, we are working in full set-theory; thus we know that a modulus of uniform equicontinuity exists.

  The weak variant of the Arzelà-Ascoli theorem can be modelled by using the derived representation $\delta'_{\mathcal{C}}$. Here an element is represented by a sequence converging to a name in the original representation. In other words, $\delta'_{\mathcal{C}} := \delta_{\mathcal{C}} \circ \mathrm{lim}$. See \cite[Sec.~5]{BGM12} and \cite{mZ07}. Thus
  \[
  \mathsf{AA_{weak}} :\subseteq \left(\mathcal{C}([0,1],[0,1]), \delta_{\mathcal{C}}\right)^\Nat \rightrightarrows \left(\mathcal{C}([0,1],[0,1]),\delta'_{\mathcal{C}}\right)
  .\]  
  
  It is easy to see that $\mathrm{lim}$ is sufficient to build a modulus of uniform continuity.
  Thus, we can argue as in the proof of \prettyref{thm:aa} and obtain
  \[
  \mathsf{AA} \le_{W} \mathrm{lim} \times \mathsf{BWT_{[0,1]^\Nat}} \equiv_{W} \mathsf{BWT}_{\Real}
  \]
  and in total $\mathsf{AA} \equiv_{W} \mathsf{BWT}_{\Real}$.

  Similarly, we obtain that 
  \[
  \mathsf{AA_{weak}} \equiv_{W} \mathsf{WBWT_{[0,1]^\Nat}} \equiv_{W} \mathsf{WBWT}_{\Real},
  \]
  where $\mathsf{WBWT}_X$ is the weak Bolzano-Weierstra{\ss} principle. Here the calculation of the modulus of uniform equicontinuity can be done in the representation, since it involves a $\mathrm{lim}$, and $\mathsf{WBWT_{[0,1]^\Nat}}\equiv_W \mathsf{WBWT}_{\Real}$ gives the values of solution on the rational numbers in $[0,1]$. With this an associate of solution can be defined.
\end{remark}

\bibliographystyle{amsplain}
\bibliography{swkl,weak,primrec}
\end{document}